\newtheorem{Theorem}{Theorem}
\newtheorem{Proposition}[Theorem]{Proposition}
\title{The primes are not metric Poissonian}
\author{Aled Walker}
\begin{document}
\begin{abstract}
It has been known since Vinogradov that, for irrational $\alpha$, the sequence of fractional parts $\{\alpha p\}$ is equidistributed in $\mathbb{R}/\mathbb{Z}$ as $p$ ranges over primes. There is a natural second-order equidistribution property, a pair correlation of such fractional parts, which has recently received renewed interest, in particular regarding its relation to additive combinatorics. In this paper we show that the primes do not enjoy this stronger equidistribution property. 
\end{abstract}
\maketitle

\section{Introduction}
Let $\mathcal{A}\subset \mathbb{N}$ be an infinite sequence of natural numbers, and let $A_N$ denote the first $N$ elements of $\mathcal{A}$. For $\alpha \in [0,1]$, we consider the sequence $\alpha \mathcal{A}$, taken modulo 1. Recall that the sequence $\alpha \mathcal{A}$ is \emph{equidistributed} in $\mathbb{R}/\mathbb{Z}$ if for every interval $I\subset \mathbb{R}/\mathbb{Z}$ one has 
\begin{equation}
\label{equidistribution definition}
\lim\limits_{N\rightarrow \infty}\frac{1}{N}\sum\limits_{x\in A_N}\mathbbm{1}_{I}(\alpha x) = \vert I\vert.
\end{equation} 
\noindent For many arithmetic sequences $\mathcal{A}$ of interest, the sequence $\alpha \mathcal{A}$ is equidistributed in $\mathbb{R}/\mathbb{Z}$ for all irrational $\alpha$. This is true for $\mathcal{A}=\mathbb{N}$ itself, or more generally the set of $k^{th}$ powers for any $k\in \mathbb{N}$, and, most pertinently for us, the set of primes. 

In this paper we will consider a strictly stronger notion of equidistribution. With notation as above, we define the pair correlation function \begin{equation}
\label{key definition of pair correlation function}
F(\mathcal{A},\alpha,s,N) :=\frac{1}{N} \sum\limits_{\substack{x_i,x_j\in A_N\\ x_i\neq x_j}} \mathbbm{1}_{[-s/N,s/N]}( \alpha(x_i - x_j)),
\end{equation}
\noindent where both the interval $[-s/N,s/N]$ and the sequence $\alpha \mathcal{A}$ are considered modulo $1$.\\

 Informally, $F(\mathcal{A},\alpha,s,N)$ counts the number of pairs $(\alpha x_i,\alpha x_j)$ such that the distance $\alpha x_i - \alpha x_j \text{ mod } 1$ is approximately $s$ times the average gap length of the sequence $\alpha A_N\text{ mod } 1$. Analysing the behaviour of $F(\mathcal{A},\alpha,s,N)$ for a specific $\alpha$ can require delicate Diophantine information about $\alpha$ (see \cite{H-B10}, \cite{RSZ01}), but one may instead settle for results which hold for almost all $\alpha$.
 
 In the setting of (\ref{equidistribution definition}), \emph{any} $\mathcal{A}$ satisfies the equidistribution property for almost all $\alpha$ (the sharpest results in this direction are due to Baker \cite{Ba81}). However, in the setting of pair correlations, the situation is more subtle. We say that the sequence $\mathcal{A}$ is `metric Poissonian' if for almost all $\alpha\in [0,1]$, and for all fixed $s>0$, we have 
\begin{equation}
\label{def of met poiss}
F(\mathcal{A},\alpha,s,N) = 2s(1+o_{\mathcal{A},\alpha,s}(1))
\end{equation} 
\noindent  as $N\rightarrow \infty$. Notice that if we had picked $N$ i.i.d. random variables $(X_n)_{n\in [N]}$ uniformly distributed on $\mathbb{R}/\mathbb{Z}$, instead of the sequence $\alpha A_N \text{ mod }1$, then as $N$ tends to infinity the equivalent pair correlation function would tend to $2s$ with high probability. Therefore (\ref{def of met poiss}) may be viewed as some strong indication that $\alpha \mathcal{A}\text{ mod }1$ exhibits the behaviour of a random sequence. The connection to the equidistribution property (\ref{equidistribution definition}) was recently made rigorous: indeed, three simultaneous papers \cite{ALP16,GL16, St17} recently showed that if (\ref{def of met poiss}) holds, for some fixed $\alpha$ and for all $s$, then for the same $\alpha$ one has that $\alpha \mathcal{A}$ is equidistributed in $\mathbb{R}/\mathbb{Z}$. \\

One might expect that, for the classical sequences $\mathcal{A}$ where $\alpha \mathcal{A}$ is equidistributed for irrational $\alpha$, one could prove that these sequences $\mathcal{A}$ are metric Poissonian. Indeed, for $k\geqslant 2$ and $\mathcal{A}$ the set of $k^{th}$ powers this was shown by Rudnick and Sarnak \cite{RS98}. However, the sequence $\mathcal{A}=\mathbb{N}$ is \emph{not} metric Poissonian. This follows from consideration of the continued fraction expansion of $\alpha$, but is in fact a special case of a more general phenomenon, connected to the large \emph{additive energy} of this particular set $\mathcal{A}$.

For a finite set $B\subset \mathbb{N}$ we define the additive energy $E(B)$ to be the number of quadruples $(b_1,b_2,b_3,b_4)\in B^4$ such that $b_1+b_2 = b_3+b_4$. If $\vert B\vert =N$, then we have the trivial bounds $N^2\ll E(B)\leqslant N^3$. For $x\in\mathbb{R}$, let us write $\Vert x\Vert$ for $\operatorname{min}_{y\in \mathbb{Z}}\vert x-y\vert$. Then, for an additive quadruple $(b_1,b_2,b_3,b_4)$ satisfying $b_1+b_2 = b_3+b_4$, obviously if $\Vert \alpha(b_1-b_3)\Vert \leqslant \frac{s}{N}$ then $\Vert \alpha(b_2-b_4)\Vert\leqslant \frac{s}{N}$. This is extremely different behaviour than that which would be seen if $\alpha b_1$, $\alpha b_2$, $\alpha b_3$, $\alpha b_4$ were genuinely i.i.d. uniform random variables on $\mathbb{R}/\mathbb{Z}$, and indeed we have the following result of Bourgain, which shows that all sets of nearly maximal energy fail to have the metric Poissonian property. 

\begin{Theorem}{\cite[Appendix]{ALLB16}}
\label{large energy theorem}
Suppose $$\operatorname{limsup}\limits_{N\rightarrow \infty} \frac{E(A_N)}{N^3}>0.$$ Then $\mathcal{A}$ is not metric Poissonian. 
\end{Theorem}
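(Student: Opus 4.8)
The plan is to produce a single value $s>0$ and a set of $\alpha$ of positive Lebesgue measure along which $F(\mathcal{A},\alpha,s,N)$ fails to converge to $2s$; by the definition of the metric Poissonian property this is all that is needed. Passing to a subsequence, assume $E(A_N)\geqslant cN^3$ for every $N$ in an infinite set $\mathcal{N}$, with $c>0$ fixed. Write $m_N(q):=\#\{(a,b)\in A_N^2:a\neq b,\ a-b=q\}$, so that a short inclusion–exclusion gives $\sum_q m_N(q)^2=E(A_N)-N^2\geqslant \tfrac c2 N^3$ for large $N\in\mathcal{N}$, while $\sum_q m_N(q)=N^2-N$ and $m_N(q)\leqslant N$ always. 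The terms with $m_N(q)<\tfrac c8 N$ contribute at most $\tfrac c8 N\sum_q m_N(q)<\tfrac c8 N^3$ to the first sum, so the set of \emph{popular differences} $P_N:=\{q:m_N(q)\geqslant \tfrac c8 N\}$ satisfies $\sum_{q\in P_N}m_N(q)^2\geqslant \tfrac{3c}{8}N^3$; since each term is $\leqslant N^2$ this forces $|P_N|\gg_c N$.

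Now fix $s:=c/100$ and, for $q\in P_N$, put $B_q(N):=\{\alpha\in[0,1]:\Vert \alpha q\Vert\leqslant s/N\}$; the change of variable $\beta=q\alpha$ shows $|B_q(N)|=2s/N$. If $\alpha\in B_q(N)$ then every one of the $m_N(q)$ ordered pairs with difference $q$ contributes to the sum defining $F$, so $F(\mathcal{A},\alpha,s,N)\geqslant m_N(q)/N\geqslant \tfrac c8>2s$. Hence on $U_N:=\bigcup_{q\in P_N}B_q(N)$ we have $F(\mathcal{A},\alpha,s,N)\geqslant c/8$, bounded away from $2s$. The proof would then be finished once we know that $|U_N|\geqslant\delta$ for infinitely many $N\in\mathcal{N}$, for some fixed $\delta=\delta(c)>0$: by continuity of Lebesgue measure from above, $|\limsup_{N\in\mathcal{N}}U_N|=\lim_n \bigl|\bigcup_{N\geqslant n,\,N\in\mathcal{N}}U_N\bigr|\geqslant\delta>0$, and for every $\alpha$ in this $\limsup$ set one has $F(\mathcal{A},\alpha,s,N)\geqslant c/8$ for infinitely many $N$, hence $F(\mathcal{A},\alpha,s,N)\not\to 2s$. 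Since this occurs on a positive-measure set of $\alpha$ for our fixed $s$, $\mathcal{A}$ is not metric Poissonian.

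The main obstacle is thus the lower bound $|U_N|\gg_c 1$ for infinitely many $N$: a purely metric–Diophantine statement about the resonance sets of the $\gg_c N$ integers in $P_N$. The naive Cauchy–Schwarz bound $|U_N|\geqslant\bigl(\sum_{q\in P_N}|B_q(N)|\bigr)^2\big/\sum_{q,q'\in P_N}|B_q(N)\cap B_{q'}(N)|$ is not enough, because $|B_q(N)\cap B_{q'}(N)|\asymp (s/N)\,\gcd(q,q')/\max(q,q')$, so the denominator can be of size $|P_N|\log N$ — already for $\mathcal{A}=\mathbb{N}$, where $P_N=\{1,\dots,N-1\}$ — giving only $|U_N|\gg s/\log N$, too weak to run the $\limsup$ argument. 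Instead one must exploit the additive structure forced by $E(A_N)\gg N^3$: by Balog–Szemerédi–Gowers there is $A'\subseteq A_N$ with $|A'|\gg_c N$ and $|A'-A'|\ll_c|A'|$, and then $A'-A'$ (which already contains $\gg_c N$ differences $q$ with $m_N(q)\gg_c N$) lies inside a generalised arithmetic progression of rank $d=O_c(1)$ and size $\ll_c N$. On such a progression one bounds $|U_N|$ from below by a pigeonhole/Dirichlet-type covering estimate; after a dilation normalising one generator to size $\asymp N$ the rank-one case reduces exactly to Dirichlet's theorem (together with the fact that almost every $\alpha$ has infinitely many large partial quotients — the same mechanism by which $\mathbb{N}$ itself fails to be metric Poissonian). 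Establishing the analogous coverage bound for progressions of rank $d\geqslant 2$ is the genuinely delicate point, and is where I would expect the bulk of the work to lie.
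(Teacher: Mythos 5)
Your combinatorial setup (the popular difference set $P_N$ with $|P_N|\gg_c N$ and $m_N(q)\gg_c N$ for $q\in P_N$, and the observation that $F\geqslant c/8$ on each $B_q(N)$) is correct and is indeed how Bourgain's argument in the appendix of \cite{ALLB16} begins. But you then set yourself the task of proving $|U_N|\gg_c 1$ along a subsequence, and you rightly recognise that you cannot do it: the proposed route via Balog--Szemer\'edi--Gowers, Freiman's theorem and a Dirichlet-type covering estimate for generalised arithmetic progressions is left entirely open, and it is not at all clear that it can be pushed through (nor, as it turns out, is it needed). This is a genuine gap, not a routine verification.

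The missing idea is that one should \emph{not} try to lower-bound the measure of $U_N$. Bourgain instead lower-bounds the \emph{integral} of $F$ over $U_N$, which is easy and unconditional: since $B_q(N)\subseteq U_N$ for every $q\in P_N$, one has
\[
\int_{U_N}F(\mathcal{A},\alpha,s,N)\,d\alpha \;\geqslant\; \frac{1}{N}\sum_{q\in P_N}m_N(q)\,|B_q(N)| \;=\;\frac{2s}{N^2}\sum_{q\in P_N}m_N(q)\;\gg_c s,
\]
with the implied constant independent of $N$. On the other hand $|U_N|\leqslant\sum_{q\in P_N}|B_q(N)|=|P_N|\cdot 2s/N\ll_c s$, since $|P_N|\ll_c N$ from $\sum_q m_N(q)<N^2$. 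Now suppose for contradiction that $F(\mathcal{A},\cdot,s,N)\to 2s$ almost everywhere; by Egorov's theorem there is a set $\Omega$ with $|\Omega^c|<\varepsilon$ on which the convergence is uniform, so for $N$ large $F\leqslant 3s$ on $\Omega$, and, using $\int_0^1 F\,d\alpha=2s(1-1/N)$ together with $\int_\Omega F\to 2s|\Omega|$, also $\int_{\Omega^c}F\,d\alpha\leqslant 3s\varepsilon$ for $N$ large. Splitting $\int_{U_N}F=\int_{U_N\cap\Omega}F+\int_{U_N\cap\Omega^c}F\leqslant 3s|U_N|+3s\varepsilon\ll_c s^2+s\varepsilon$, and comparing with the lower bound $\gg_c s$, gives a contradiction once $s$ and $\varepsilon$ are chosen small in terms of $c$. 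So the theorem follows with no additive-structural input beyond the popular-difference count you already had; the $\limsup_{N}U_N$ device and the appeal to Freiman-type structure can both be discarded.
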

\noindent It is clear that the sequence $\mathcal{A}=\mathbb{N}$ satisfies the hypotheses of this theorem, and therefore this sequence is not metric Poissonian.

Remarkably, a near-converse to this theorem has also been proved to be true. 
 
 \begin{Theorem}
 \label{small energy theorem}
 Let $\delta>0$ be fixed, and suppose that $E(A_N) \ll_\delta N^{3-\delta}$ for this fixed $\delta$ and for every $N$. Then $\mathcal{A}$ is metric Poissonian. 
 \end{Theorem}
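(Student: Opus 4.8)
The strategy is the standard second-moment / variance method combined with a Borel–Cantelli argument. Write $R(s,N) := F(\mathcal{A},\alpha,s,N)$ as a sum over $x_i \neq x_j$ of the indicator of $\|\alpha(x_i-x_j)\| \le s/N$. Expanding the indicator into a Fourier series on $\mathbb{R}/\mathbb{Z}$, we get $\int_0^1 R(s,N)\,d\alpha = \frac{1}{N}\sum_{x_i\neq x_j}\frac{2s}{N}$, which is $2s(1 + o(1))$ since there are $N(N-1)$ ordered pairs, so the expectation is already correct. Everything therefore hinges on controlling the variance $\int_0^1 |R(s,N) - \mathbb{E}R(s,N)|^2\,d\alpha$. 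Expanding the square produces a fourfold sum over $x_i,x_j,x_k,x_\ell$, and the $\alpha$-integral of the corresponding exponential sum is nonzero essentially only when $(x_i - x_j) \pm (x_k - x_\ell) = 0$, i.e. on additive quadruples. This is exactly where the hypothesis $E(A_N) \ll_\delta N^{3-\delta}$ enters: the "diagonal" contribution is $O(N)$ pairs contributing $O(1/N)$ each, hence $O(1)$, but we need the genuine off-diagonal additive-quadruple term to be $o(N^{\text{something}})$ so that after dividing by the $N^{-2}$ normalization it is a negative power of $N$.

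First I would make the cutoff smooth: replace $\mathbbm{1}_{[-s/N,s/N]}$ by a majorant/minorant pair of smooth functions $\phi^\pm$ with rapidly decaying Fourier coefficients $\widehat{\phi^\pm}(n) \ll_A \min(s/N, (s|n|/N)^{-A})$ and total mass $\approx 2s/N$, so the truncated Fourier expansions have effective length $O(N^{1+\varepsilon}/s)$. Then $R(s,N)$ is sandwiched between $\widetilde{R}^\pm(s,N) = \frac{1}{N}\sum_{m}\widehat{\phi^\pm}(m)\sum_{x_i,x_j \in A_N} e(m\alpha(x_i - x_j))$ (now including $x_i = x_j$, which only adds a controllable main term). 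Computing the variance of $\widetilde{R}^\pm$: the key identity is $\int_0^1 e(\alpha(m(x_i-x_j) - n(x_k - x_\ell)))\,d\alpha = \mathbbm{1}[m(x_i - x_j) = n(x_k - x_\ell)]$. For the off-diagonal range $m,n \neq 0$ one bounds the number of solutions, for each pair $(m,n)$, of $m(x_i - x_j) = n(x_k-x_\ell)$ with $x$'s in $A_N$: after a gcd/divisor argument this is controlled by $E(A_N)$ up to a divisor-function factor $\tau(mn) \ll (mn)^\varepsilon$. Summing $\widehat{\phi^\pm}(m)\widehat{\phi^\pm}(n)$ against this, and using that the Fourier tails are summable, yields a variance bound of the shape $\int_0^1 |\widetilde{R}^\pm - \mathbb{E}\widetilde{R}^\pm|^2 \,d\alpha \ll_{s,\varepsilon} N^{-2}\cdot N^{\varepsilon}\cdot E(A_N)/N \ll_{s,\delta,\varepsilon} N^{-1-\delta+\varepsilon}$, which is summable along, say, $N = \lfloor e^{\sqrt{k}} \rfloor$ or even a polynomially-growing sequence once we choose $\varepsilon < \delta$.

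With a summable variance bound, Chebyshev plus Borel–Cantelli gives $\widetilde{R}^\pm(s,N_k) \to 2s$ for a.e.\ $\alpha$ along a sparse subsequence $N_k$; a monotonicity/sandwiching argument ($R(s,N)$ is essentially monotone in $s$ and one interpolates between consecutive $N_k$ using that $N_{k+1}/N_k \to 1$, or directly that $F(\mathcal{A},\alpha,s,N)$ for $N$ between $N_k$ and $N_{k+1}$ is pinched between $F(\mathcal{A},\alpha,s',N_k)$ and $F(\mathcal{A},\alpha,s'',N_{k+1})$ for nearby $s',s''$) upgrades this to convergence along all $N$. Finally one removes the smoothing: since $\phi^- \le \mathbbm{1}_{[-s/N,s/N]} \le \phi^+$ and both smooth versions converge to $2s^\pm$ with $s^\pm \to s$, the squeeze gives $F(\mathcal{A},\alpha,s,N)\to 2s$. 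Running this over a countable set of $s$ (e.g.\ $s\in\mathbb{Q}_{>0}$) and using monotonicity in $s$ to fill the gaps, we conclude that for a.e.\ $\alpha$ the relation holds for all $s>0$, which is the metric Poissonian property.

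The main obstacle is the variance computation — specifically, showing cleanly that the off-diagonal fourfold sum is genuinely governed by $E(A_N)$ with only an $N^\varepsilon$ loss, handling the coupling between the Fourier variables $m,n$ and the gcd structure of $m(x_i-x_j) = n(x_k-x_\ell)$, and ensuring the $\varepsilon$-loss can be taken strictly smaller than the $\delta$ saved by the energy hypothesis so that the resulting bound is summable. The subsequence-to-full-sequence interpolation and the de-smoothing are routine but must be done carefully to keep the dependence on $s$ uniform enough that the final "for all $s$" quantifier survives.
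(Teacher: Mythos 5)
The paper does not actually prove this theorem: it attributes it to Aistleitner, Larcher and Lewko \cite{ALLB16}, with a footnote observing that it also follows from Theorem 3.2 of Harman's book \cite{Ha98} together with a modification of the variance estimate on page 69 of that volume. Your proposal correctly identifies the overall architecture used in those sources: smooth majorant/minorant, second-moment computation, Fourier expansion so that the $\alpha$-integral localises on the relation $m(x_i-x_j)=n(x_k-x_\ell)$, Chebyshev plus Borel--Cantelli along a polynomially growing subsequence, interpolation between consecutive scales, de-smoothing, and a countable dense set of $s$ filled in by monotonicity. All of that is the right skeleton.

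The step you yourself flag as the main obstacle is, however, a genuine gap as stated, and not merely a matter of tracking $\varepsilon$-losses. You assert that for each $(m,n)$ the number of quadruples with $m(x_i-x_j)=n(x_k-x_\ell)$ is ``controlled by $E(A_N)$ up to a divisor-function factor $\tau(mn)$.'' That bound is uniform in $(m,n)$, and since $\sum_{m\neq 0}\bigl|\widehat{\phi^{\pm}}(m)\bigr|\asymp \phi^{\pm}(0)\asymp 1$ rather than $O(1/N)$, summing it against $\bigl|\widehat{\phi^{\pm}}(m)\widehat{\phi^{\pm}}(n)\bigr|$ yields a variance of order $N^{-2+o(1)}E(A_N)\asymp N^{1-\delta+o(1)}$, which does not even tend to $0$. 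The extra factor of $1/N$ appearing in your claimed bound $N^{-2}\cdot N^{\varepsilon}\cdot E(A_N)/N$ is therefore unaccounted for. To obtain it one must exploit that, writing $d=\gcd(m,n)$, $m=dm'$, $n=dn'$, the solutions are parametrised by $t\neq 0$ with $x_i-x_j=n't$ and $x_k-x_\ell=m't$, so the relevant count $\sum_{t}r(n't)r(m't)$ runs over a sparse progression and is genuinely \emph{smaller} than $E(A_N)=\sum_u r(u)^2$ by roughly a factor $\max(|m'|,|n'|)$ for typical coprime $m',n'$. Summing that saving over $(m,n)\leqslant N/s$ produces a GCD-type sum, and controlling it (via a G\'al-type estimate) is precisely the technical heart of the argument in \cite{ALLB16}; a bare $\tau(mn)$ loss does not supply the missing power of $N$. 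There is also a small arithmetic slip at the end ($N^{-3+\varepsilon}E(A_N)\ll N^{-\delta+\varepsilon}$, not $N^{-1-\delta+\varepsilon}$), but this is immaterial since you already pass to a sparse subsequence.
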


\noindent This theorem first appears as stated\footnote{The same result may be deduced from Theorem 3.2 of Harman's earlier book \cite{Ha98}, combined with the relevant modification of the variance estimate from page 69 of the same volume.} in the recent work of Aistleitner, Larcher and Lewko \cite{ALLB16}. It immediately implies the theorem of Rudnick-Sarnak on $k^{th}$ powers, and also earlier work on lacunary sequences \cite{RZ99}.  \\ 

It is natural to wonder whether there is a tight energy threshold for this problem. Although the truth seems unlikely to be so clean, it is certainly interesting to consider the behaviour of specific sets $\mathcal{A}$ which satisfy $N^{3-\varepsilon} \ll_\varepsilon E(A_N)\ll o(N^3)$ for all $\varepsilon>0$. In this paper, we prove the following result (answering a question posed by Nair\footnote{at the ELAZ 2016 conference in Strobl.}).
\begin{Theorem}[Main Theorem]
\label{Main Theorem}
The primes are not metric Poissonian.
\end{Theorem}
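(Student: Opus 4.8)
The plan is to show that for almost every $\alpha$ the pair correlation $F(\mathcal{A},\alpha,s,N)$ fails to converge to $2s$, where $\mathcal{A}$ is the set of primes; in fact I will produce a single value of $s$ and a sequence $N_1 < N_2 < \cdots$ along which $F(\mathcal{A},\alpha,s,N_k)$ stays bounded away from $2s$. Since $N^{3-\varepsilon} \ll_\varepsilon E(A_N) = o(N^3)$, neither Theorem~\ref{large energy theorem} nor Theorem~\ref{small energy theorem} is available, and one must use the structure of the primes directly. The mechanism is the one flagged in the introduction, localised to residue classes: if $\alpha$ is extremely close to a rational $a/q$, then $\Vert\alpha(p_i - p_j)\Vert$ is tiny for every pair of primes lying in a common residue class modulo $q$, and there are too many such pairs for $F$ to remain close to $2s$. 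The reason a positive-measure set of $\alpha$ benefits is that every irrational has infinitely many superb rational approximations — its continued fraction convergents — and the metric theory of continued fractions pins down how good these are for almost every $\alpha$.

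The heart of the matter is the following inequality. Let $\alpha$ be irrational with convergents $a_k/q_k$, so that $\delta_k := \vert\alpha - a_k/q_k\vert < 1/(q_k q_{k+1})$, and fix an integer $R = R(s) > 2s+1$. Put $N_k := \lceil R q_k \rceil$, and let $p_1 < \cdots < p_{N_k}$ be the first $N_k$ primes, so that $X_k := p_{N_k} \ll N_k \log N_k$ by the prime number theorem. Whenever $p_i \equiv p_j \pmod{q_k}$ one has $q_k \mid (p_i - p_j)$, hence $\tfrac{a_k}{q_k}(p_i - p_j) \in \mathbb{Z}$ and therefore
\[
\Vert \alpha(p_i - p_j)\Vert \;=\; \Vert \delta_k(p_i - p_j)\Vert \;\leqslant\; \delta_k \vert p_i - p_j\vert \;\leqslant\; \delta_k X_k .
\]
If $\delta_k X_k \leqslant s/N_k$, then every such pair is counted by $F(\mathcal{A},\alpha,s,N_k)$; substituting $\delta_k < 1/(q_k q_{k+1})$, $X_k \ll q_k \log q_k$ and $N_k \asymp q_k$, this requirement reduces to $a_{k+1} \geqslant C(s)\log q_k$ for a suitable $C(s)$. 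Moreover, by Cauchy--Schwarz alone — and here, crucially, one needs no equidistribution of primes in progressions, which would anyway fail for moduli $q_k$ of size $\asymp N_k$ — the number of ordered pairs $(i,j)$ with $i \neq j$ and $p_i \equiv p_j \pmod{q_k}$ is at least $N_k^2/q_k - N_k$. Hence, for those $k$,
\[
F(\mathcal{A},\alpha,s,N_k) \;\geqslant\; \frac{1}{N_k}\Bigl(\frac{N_k^2}{q_k} - N_k\Bigr) \;\geqslant\; R - 1 \;>\; 2s ,
\]
which is incompatible with the metric Poissonian property for this $\alpha$ at the parameter $s$.

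It then remains to verify that the Diophantine condition ``$a_{k+1}(\alpha) \geqslant C(s)\log q_k$ for infinitely many $k$'' holds for almost every $\alpha$. Since $\tfrac1k \log q_k \to \tfrac{\pi^2}{12\ln 2}$ almost everywhere (L\'evy), this is equivalent to ``$a_{k+1}(\alpha) \geqslant C'(s)\,k$ infinitely often'', and since $\sum_k 1/(C'(s)\,k) = \infty$, the Borel--Bernstein theorem from the metric theory of continued fractions shows that the set of such $\alpha$ has full Lebesgue measure. Assembling the pieces: for almost every $\alpha$, and with $s = 1$ say, there is a sequence $N_k \to \infty$ with $F(\mathcal{A},\alpha,1,N_k) \geqslant 3$, so the primes are not metric Poissonian. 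Allowing $s$ to depend on $\alpha$, one may even arrange $N_k/q_k \to \infty$ and thereby push $F(\mathcal{A},\alpha,s,N_k) \to \infty$.

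The step I expect to be the main obstacle is the calibration of the scale $N_k$. On one hand $N_k$ must be small enough that the good approximation $\delta_k$ still dominates the differences $p_i - p_j$, in the sense $\delta_k X_k \leqslant s/N_k$ — which pushes towards $N_k \lesssim (q_k q_{k+1})^{1/2}$ up to logarithmic factors; on the other hand it must be large enough that the common-residue-class pairs outnumber $2s N_k$, which forces $N_k \gg_s q_k$. The two constraints are jointly satisfiable precisely when $q_{k+1} \gg_s q_k \log q_k$, equivalently when $a_{k+1} \gg_s \log q_k$, and the essential point is that — despite the apparent severity of demanding such strong approximations at infinitely many scales — this is a full-measure property of $\alpha$. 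Had one instead been compelled to demand $\delta_k \ll s/(N^2\log N)$ with $N$ substantially larger than $q_k$, the corresponding set of $\alpha$ would have been null, and the argument would collapse; locating the correct balance is therefore the crux. The remaining ingredients — the elementary bound on the number of pairs in a fixed residue class, and the passage from failure at a single $s$ to failure of the metric Poissonian property — are routine.
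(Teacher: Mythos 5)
Your argument is correct, and it takes a genuinely different route from the paper's. The paper proves Proposition~\ref{key proposition} by combining Harman's metric Diophantine theorem (Theorem~\ref{Harman's theorem}) with the exceptional-set bound for the binary Goldbach problem (Theorem~\ref{binary Goldbach}); the arithmetic content is that for a typical $n$ with $\Vert\alpha n\Vert$ tiny, most of the small multiples $dn$ ($d\le\tfrac1{10}\log n$) admit $\gg n/\log n$ representations as a difference of two primes below $\tfrac12 n\log n$. You bypass the arithmetic of prime differences entirely: you replace Harman's theorem by the classical metric theory of continued fractions (L\'evy plus Borel--Bernstein) to produce, for almost every $\alpha$, infinitely many convergent denominators $q_k$ with $a_{k+1}\gg_s\log q_k$, and you replace the Goldbach input by the pigeonhole (Cauchy--Schwarz) lower bound $\sum_r m_r^2-N_k\ge N_k^2/q_k-N_k$ on ordered pairs of the first $N_k$ primes lying in a common residue class mod $q_k$. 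The only fact about the primes you use is $p_N\ll N\log N$. The calibration $N_k=\lceil Rq_k\rceil$ with $R>2s+1$ then gives $F\ge N_k/q_k-1\ge R-1>2s$, and the Diophantine requirement $\delta_kX_k\le s/N_k$ reduces, as you compute, to $q_{k+1}\gg_s q_k\log q_k$, a full-measure event. Because nothing beyond the counting function of the primes is used, your proof in fact yields the strictly more general statement that \emph{any} increasing sequence $\mathcal A$ with $a_N\ll N\log N$ fails to be metric Poissonian -- a strengthening of Bourgain's Theorem~\ref{large energy theorem} in this density regime -- whereas the paper's Goldbach-based route is tailored to primes but shows more, namely that many \emph{individual} multiples $dn$ carry $\gg n/\log n$ prime-difference representations. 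Two small remarks on precision: the step ``$a_{k+1}\ge C(s)\log q_k$ i.o.\ is \emph{equivalent} to $a_{k+1}\ge C'(s)k$ i.o.'' should read ``is implied, on the full-measure L\'evy set, by,'' since the two events are not literally equal; and your closing aside about forcing $F\to\infty$ by letting $R=R_k\to\infty$ requires $R_k$ to grow slowly enough that $\sum_k 1/(R_k^2 k)$ still diverges (e.g.\ $R_k\asymp(\log k)^{1/2}$), because the threshold on $a_{k+1}$ scales like $R_k^2\log q_k$. Neither point affects the main conclusion.
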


When $\mathcal{A}$ is the set of primes one has $E(A_N)\asymp N^3(\log N)^{-1}$, so certainly the primes are not included in the range of applicability of either Theorem \ref{large energy theorem} or Theorem \ref{small energy theorem}. 

In \cite{ALLB16}, Bourgain constructs a sequence $\mathcal{A}$ which is not metric Poissonian but nonetheless has $E(A_N) = o(N^3)$, thereby showing that the converse to Theorem \ref{large energy theorem} is false. A quantitative analysis of his argument shows that $E(A_N)\ll_{\varepsilon} N^3 (\log\log N)^{-\frac{1}{4} + \varepsilon}$ is achievable, for any $\varepsilon>0$. So, as an immediate corollary to Theorem \ref{Main Theorem}, we have an improved bound for the smallest energy $E(A_N)$ of the initial segments of a set $\mathcal{A}$ which is not metric Poissonian. \\\\\\

\noindent\textbf{Acknowledgements}

\noindent The author would like to thank Prof. R. Nair for making him aware of the central question of this paper, Prof. C. Aistleitner for comments on an earlier version, and Prof. B. J. Green for his doctoral supervision. A helpful conversation was also had with Sam Chow. The work was completed while the author was a Program Associate at the Mathematical Sciences Research Institute in Berkeley, which provided excellent working conditions. The author is supported by EPSRC grant no. EP/M50659X/1. 

\section{Proof of Theorem \ref{Main Theorem}}
The plan of the proof is as follows. For each fixed $\alpha$, we will try to find infinitely many $n$ such that $\Vert \alpha n\Vert $ is extremely small. Using such an $n$ we will be able to construct a scale $N$ and a small constant $s$ such that $\Vert \alpha mn\Vert\leqslant s/N$ for some initial segment of integers $m$. By a variant of a well-known result concerning the exceptional set for the Goldbach problem, we may show that many such $mn$ are represented many times as $p_i-p_j$ for two primes $p_i,p_j\leqslant p_N$, the $N^{th}$ prime. Combining all these observations will enable us to conclude, provided $s$ is small enough, that $F(\mathcal{P},\alpha,s,N)\geqslant c$ for some constant $c>2s$. Since this holds for infinitely many $N$, we cannot have $ F(\mathcal{P},\alpha,s,N) = 2s(1+o_{\alpha,s}(1))$ for all almost all $\alpha$ and for all $s>0$. In fact, we will show that, for almost all $\alpha$, this asymptotic \emph{fails} to hold.  \\

We now begin to consider the details of this argument. We will use the following result of Harman on Diophantine approximation (\cite[Theorem 4.2]{Ha98}, \cite{Ha88}). 

\begin{Theorem}
\label{Harman's theorem}
Let $\psi(n)$ be a non-increasing function with $0<\psi(n)\leqslant \frac{1}{2}$. Suppose that $$\sum\limits_n\psi(n)=\infty.$$ Let $\mathcal{B}$ be an infinite set of integers, and let $S(\mathcal{B},\alpha,N)$ denote the number of $n\leqslant N$, $n\in \mathcal{B}$, such that $\Vert n\alpha\Vert < \psi(n)$. Then for almost all $\alpha$ we have
\begin{equation}
S(\mathcal{B},\alpha,N) = 2\Psi(N,\mathcal{B}) + O_\varepsilon(\Psi(N)^{\frac{1}{2}}(\log \Psi(N)
)^{2+\varepsilon})
\end{equation}
\noindent for all $\varepsilon>0$, with implied constant uniform in $\alpha$, where $$\Psi(N) = \sum\limits_{n\leqslant N} \psi(n)$$ and $$\Psi(N,\mathcal{B}) = \sum\limits_{\substack{n\leqslant N\\n\in \mathcal{B}}} \psi(n).$$
\end{Theorem}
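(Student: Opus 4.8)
The plan is to follow the standard route to a sharp almost-everywhere statement — centre the counting function, pass to a fluctuation term, and bound that by a second-moment / Gál--Koksma-type argument — but preceded by an essential step that discards the arithmetically pathological $\alpha$. Put $f_n(\alpha)=\mathbbm{1}_{\Vert n\alpha\Vert<\psi(n)}$, so that $S(\mathcal{B},\alpha,N)=\sum_{n\leqslant N,\,n\in\mathcal{B}}f_n(\alpha)$; since $0<\psi(n)\leqslant\frac12$ one has $\int_0^1 f_n\,d\alpha=2\psi(n)$, and with $g_n:=f_n-2\psi(n)$ the theorem is equivalent to the fluctuation estimate
\[
\Big|\sum_{\substack{n\leqslant N\\ n\in\mathcal{B}}}g_n(\alpha)\Big|=O_\varepsilon\big(\Psi(N)^{1/2}(\log\Psi(N))^{2+\varepsilon}\big)\qquad(N\to\infty)
\]
for almost every $\alpha$, with the implied constant independent of $\alpha$.

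A naive global variance bound cannot deliver this. If $\alpha$ lies very close to a rational $a/q$ with small $q$, then $\Vert n\alpha\Vert$ is simultaneously tiny for every multiple $n$ of $q$ up to $N$, and $S(\mathcal{B},\alpha,N)$ can be as large as $\asymp N/q$; such $\alpha$ form a null set but carry the bulk of the $L^2$ mass, so that $\int_0^1\big|\sum_{n\leqslant N}g_n\big|^2\,d\alpha$ is generically far larger than $\Psi(N)$ — the offending contribution comes from pairs $m,n$ with a large common divisor, and is measured by a divisor-weighted sum that outgrows $\Psi(N)$ (it is $\gg\log N$ when $\psi(n)=\tfrac1{n\log n}$, whereas $\Psi(N)\asymp\log\log N$). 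The first move, then, is to excise these $\alpha$: by Khinchin's theorem and Borel--Cantelli, for almost every $\alpha$ the partial quotients of the continued fraction obey $a_k\ll_{\alpha,\varepsilon}k^{1+\varepsilon}$ and the convergent denominators obey $\Vert q_k\alpha\Vert\gg_{\alpha,\varepsilon}q_k^{-1}(\log q_k)^{-1-\varepsilon}$ for all $k$; we fix such a Diophantine-generic $\alpha$.

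For generic $\alpha$, estimate $S(\mathcal{B},\alpha,N)$ blockwise along the convergent denominators. On $(q_k,q_{k+1}]$ the three-distance theorem gives the multiset $\{\Vert n\alpha\Vert:q_k<n\leqslant q_{k+1}\}$ explicitly in terms of $\Vert q_{k-1}\alpha\Vert$ and $\Vert q_k\alpha\Vert$, which makes $\#\{q_k<n\leqslant q_{k+1},\,n\in\mathcal{B}:\Vert n\alpha\Vert<\psi(n)\}$ computable essentially exactly; the monotonicity of $\psi$ lets one replace $\psi(n)$ by constant thresholds on suitable sub-blocks (a block with an unusually large partial quotient $a_{k+1}$ is long and needs an extra, hands-on subdivision). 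Summing over the $O_{\alpha,\varepsilon}(\log N)$ blocks up to $N$ gives the main term $2\Psi(N,\mathcal{B})$ plus a sum of block-errors, and a Gál--Koksma / maximal-inequality argument (cf.\ \cite[Ch.~1]{Ha98}) applied to the truncated block-error functions converts that sum into the claimed bound: now that the non-generic $\alpha$ are gone, the second moments of the truncated errors are dominated by increments of $\Psi$, and the maximal inequality yields $\big|\sum_{n\leqslant N,\,n\in\mathcal{B}}g_n(\alpha)\big|\ll_\varepsilon\Psi(N)^{1/2}(\log\Psi(N))^{2+\varepsilon}$ for almost all $\alpha$. The constraint $n\in\mathcal{B}$ rides through every step untouched — the three-distance structure and the genericity of $\alpha$ are blind to it — and since the error is allowed to be measured by the full $\Psi(N)$ rather than by $\Psi(N,\mathcal{B})$, the general case is no harder than $\mathcal{B}=\mathbb{N}$.

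The crux, and the main obstacle, is precisely this error bookkeeping. Bounding the sum of block-errors trivially by the sum of absolute values is far too lossy (it gives $O_\varepsilon(\Psi(N))$ at best, and really something like $O((\log N)^{2+\varepsilon})$); one needs genuine cancellation among the block-errors, and exhibiting it forces a careful accounting of how the monotonicity of $\psi$ (controlling the variation of $\psi$ inside a block) interacts with the Diophantine-genericity of $\alpha$ (which is exactly what suppresses the large-common-divisor correlations responsible for the blow-up of the unrestricted variance). The fussiest point is a block with an anomalously large partial quotient, which must be cut up and handled directly; making all of these errors telescope down to $\Psi(N)^{1/2}(\log\Psi(N))^{2+\varepsilon}$, uniformly enough in $\alpha$ to remove the $\alpha$-dependence from the final constant, is where the real effort goes.
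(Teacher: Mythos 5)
The paper does not prove this statement: it is quoted verbatim from Harman ([Ha98, Theorem 4.2], [Ha88]), so your attempt has to stand on its own, and as it stands it is a plan rather than a proof. Your diagnosis of the obstruction is correct and worth having: writing $f_n(\alpha)=\mathbbm{1}_{\Vert n\alpha\Vert<\psi(n)}$, the overlap estimate for the measure of $\{\alpha:\Vert m\alpha\Vert<\psi(m),\ \Vert n\alpha\Vert<\psi(n)\}$ carries an error of size $\gcd(m,n)\min(\psi(m)/m,\psi(n)/n)$, and for $\psi(n)\asymp 1/(n\log n)$ these errors sum to about $\log N$ while $\Psi(N)\asymp\log\log N$, so a single crude variance bound plus Gál--Koksma cannot give the stated error term. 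But the repair you propose does not close, and the step you yourself call ``the crux'' --- exhibiting cancellation among the block errors, uniformly enough in $\alpha$ --- is exactly the step that is absent. Since for slowly diverging $\Psi$ the target error $\Psi(N)^{1/2}(\log\Psi(N))^{2+\varepsilon}$ is far below the trivial $O(\log N)$ accumulation over the $O(\log N)$ convergent blocks, the entire analytic content of the theorem is left as a black box; that alone means there is no proof here, even for $\mathcal{B}=\mathbb{N}$.

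There is also a structural error in how you treat $\mathcal{B}$. You claim the three-distance description of the multiset $\{\Vert n\alpha\Vert : q_k<n\leqslant q_{k+1}\}$ makes the restricted count ``computable essentially exactly'' and that the constraint $n\in\mathcal{B}$ ``rides through every step untouched''. Knowing that multiset tells you how many $n$ in a block have $\Vert n\alpha\Vert<c$; it says nothing about how many of \emph{those} $n$ lie in $\mathcal{B}$, and no deterministic count for an $\alpha$ chosen from a $\mathcal{B}$-independent full-measure set can produce the main term $2\Psi(N,\mathcal{B})$. Concretely, take $\psi(n)=\min(\tfrac12, 1/(n\log n))$, fix any $\alpha$ satisfying your genericity conditions, and let $\mathcal{B}=\{n:\Vert n\alpha\Vert<\psi(n)\}$, which is infinite for almost every $\alpha$; then $S(\mathcal{B},\alpha,N)=\vert\mathcal{B}\cap[1,N]\vert\sim 2\Psi(N)$ for a.e.\ such $\alpha$, whereas $2\Psi(N,\mathcal{B})=2\sum_{n\leqslant N,\,n\in\mathcal{B}}\psi(n)=o(\Psi(N))$, so the asserted asymptotic fails for this pair. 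The exceptional null set in the theorem must therefore depend on $\mathcal{B}$, which means the main term can only emerge from a genuinely metric argument in which $\mathcal{B}$ enters the moment computations --- precisely the part you defer to an unspecified maximal-inequality step. This is also how the actual proofs proceed: LeVeque, Schmidt and Harman stay inside the second-moment/Gál--Koksma framework and defuse the gcd correlations not by excising bad $\alpha$ via continued fractions, but by decomposing each solution according to the common factor of $n$ with the nearest numerator, working with the reduced (coprime) approximations, whose overlaps have no gcd loss, and resumming over the divisor using the monotonicity of $\psi$; the restriction to $\mathcal{B}$ then really does ride along, because the error is measured by the full $\Psi(N)$. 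I would either rebuild the argument along those lines or simply cite [Ha98, Theorem 4.2], as the paper does.
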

\noindent This theorem may be thought of as a flexible version of Khintchines's theorem on Diophantine approximation, in which one can further pass to approximations coming from a set $\mathcal{B}$, provided $\mathcal{B}$ is relatively dense. The quality of the error term in this theorem is much better than we need in our application, although it is important that there is no dependence on $N$ except through $\Psi(N)$. Earlier results of this type include a $\log N$ factor in the error, which would not have been adequate. \\

The other technical tool will be the standard bound on the size of the exceptional set in a Goldbach-like problem.

\begin{Theorem}
\label{binary Goldbach}
For a large quantity $X$, and natural number $n\leqslant X$, define $$r(n): = \sum\limits_{\substack{p_i,p_j\leqslant X\\ p_i-p_j = n}} \log p_i \log p_j.$$ Then for any $B>0$, and for all but $O_B(\frac{X}{\log ^B X})$ exceptional values of $n\leqslant X$, we have the approximation
\begin{equation}
\label{asymptotic formula for binary Goldbach}
r(n)= \mathfrak{S}(n)J(n) + O_B(\frac{X}{\log ^B X}),
\end{equation}
\noindent where 
\begin{equation*}
\mathfrak{S}(n) :=
\begin{cases}
2\prod\limits_{p\geqslant 3}\left(1-\frac{1}{(p-1)^2}\right)\prod\limits_{\substack{p\vert n\\p\geqslant 3}}\frac{p-1}{p-2} & n\text{ even, }\\
0 & n\text{ odd}
\end{cases} 
\end{equation*}
\noindent is the singular series, and $$J(n) = \int\limits_{-\infty}^{\infty}\left\vert\int\limits_{0}^{X} e(\beta u) \, du \right\vert^2 e(-\beta n) \, d\beta$$ is the singular integral.
\end{Theorem}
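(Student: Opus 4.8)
The statement is a classical consequence of the Hardy--Littlewood circle method, and I would prove it by adapting the standard treatment of the exceptional set for the binary Goldbach problem to the case of prime differences. Set $S(\beta) := \sum_{p \leqslant X} \log p \, e(\beta p)$, so that, by the orthogonality relation $\int_0^1 e(\beta k)\,d\beta = \mathbbm{1}_{k=0}$, one has
\begin{equation*}
r(n) = \int_0^1 |S(\beta)|^2 e(-\beta n)\, d\beta .
\end{equation*}
Fix $B>0$, let $C = C(B)$ be a large constant to be chosen at the end, put $P := (\log X)^{C}$ and $Q := X/P$, and let $\mathfrak{M}$ be the union of the Farey arcs about $a/q$ with $(a,q)=1$, $q \leqslant P$, $|\beta - a/q| \leqslant 1/(qQ)$, and $\mathfrak{m} := [0,1)\setminus\mathfrak{M}$ the minor arcs; by Dirichlet's theorem every $\beta \in \mathfrak{m}$ admits an approximation $|\beta - a/q| < 1/(qQ)$ with $P < q \leqslant Q$.

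On $\mathfrak{M}$ the Siegel--Walfisz theorem gives $S(a/q + \gamma) = \tfrac{\mu(q)}{\varphi(q)}\int_0^X e(\gamma u)\,du + O(X\exp(-c\sqrt{\log X}))$, uniformly for $q \leqslant P$ and $|\gamma| \leqslant 1/(qQ)$. Substituting this into $\int_{\mathfrak{M}}|S(\beta)|^2 e(-\beta n)\,d\beta$, squaring, summing the resulting local densities $\mu(q)^2/\varphi(q)^2$ against $e(-an/q)$ over $q \leqslant P$ and all admissible $a$, and completing each $\gamma$-integral to all of $\mathbb{R}$ (at a cost that is $O_B(X(\log X)^{-B})$ after enlarging $C$) recovers exactly $\mathfrak{S}(n)J(n)$ with total error $O_B(X(\log X)^{-B})$; the local factor at the prime $2$ forces $\mathfrak{S}(n)$, and hence this main term, to vanish for odd $n$, which is consistent with the trivial bound $r(n) \ll \log X$ when $n$ is odd (one of the two primes must be $2$). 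This is routine but involves lengthy bookkeeping of the singular series.

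The crux is the minor-arc term $E(n) := \int_{\mathfrak{m}} |S(\beta)|^2 e(-\beta n)\, d\beta$, which must be shown to be $O_B(X(\log X)^{-B})$ for all but $O_B(X(\log X)^{-B})$ values of $n \leqslant X$. Since $E(n)$ is the $n$-th Fourier coefficient of $\beta \mapsto |S(\beta)|^2 \mathbbm{1}_{\mathfrak{m}}(\beta)$, Parseval's identity gives
\begin{equation*}
\sum_{n \leqslant X} |E(n)|^2 \leqslant \int_{\mathfrak{m}} |S(\beta)|^4\, d\beta \leqslant \Big(\sup_{\beta \in \mathfrak{m}} |S(\beta)|\Big)^2 \int_0^1 |S(\beta)|^2\, d\beta .
\end{equation*}
Now $\int_0^1 |S(\beta)|^2\, d\beta = \sum_{p \leqslant X}(\log p)^2 \ll X\log X$ by the prime number theorem, while Vinogradov's estimate for exponential sums over primes, applied with the approximation $P < q \leqslant Q = X/P$ valid on $\mathfrak{m}$, yields $\sup_{\mathfrak{m}}|S| \ll (\log X)^{4}\big(Xq^{-1/2} + X^{4/5} + (Xq)^{1/2}\big) \ll X(\log X)^{4 - C/2}$. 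Hence $\sum_{n \leqslant X}|E(n)|^2 \ll X^3 (\log X)^{9 - C}$, so the number of $n \leqslant X$ with $|E(n)| > X(\log X)^{-B}$ is $\ll X(\log X)^{9 - C + 2B}$, which is $\leqslant X(\log X)^{-B}$ once $C$ is taken large in terms of $B$. Combining with the major-arc evaluation gives (\ref{asymptotic formula for binary Goldbach}) for all $n \leqslant X$ outside a set of size $O_B(X(\log X)^{-B})$.

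The one genuinely non-routine ingredient is the minor-arc bound, i.e.\ Vinogradov's estimate for $\sum_{p \leqslant X}\log p\, e(\beta p)$ with a power-of-logarithm saving on $\mathfrak{m}$; everything else is standard circle-method bookkeeping together with the Parseval averaging over $n$ that converts an $L^2$-bound into control of the exceptional set.
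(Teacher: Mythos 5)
Your proof is correct and is exactly the "usual argument" the paper defers to: the paper itself gives no proof of Theorem~\ref{binary Goldbach} beyond citing Vaughan~\cite{Va97} (Chapter 3.2) and Iwaniec--Kowalski~\cite{IwKo04} (Chapter 19), noting that the difference version $p_i - p_j = n$ follows by trivial modification of the standard treatment of $p_i + p_j = n$. Your circle-method decomposition, the Siegel--Walfisz evaluation of the major arcs producing $\mathfrak{S}(n)J(n)$, and in particular the Parseval/Bessel averaging of the minor-arc contribution combined with Vinogradov's pointwise bound $\sup_{\mathfrak{m}}|S| \ll X(\log X)^{O(1)-C/2}$ to control the exceptional set, are precisely the argument in those references.
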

\begin{proof}
This result follows by trivial modifications of the usual argument for the binary Goldbach problem, originally due (independently) to van der Corput, Estermann, and \u{C}udakov. The clearest reference is \cite[Chapter 3.2]{Va97}, or, for a more modern approach, one may consider the proof of Theorem 19.1 in \cite[Chapter 19]{IwKo04}.
\end{proof}

We combine these two key ingredients in the following proposition. 
\begin{Proposition}
\label{key proposition}
There exists a small absolute $c>0$, such that for almost all $\alpha\in [0,1]$, and for all fixed $s>0$, there exist infinitely many $n$ satisfying:
\begin{enumerate}[(i)]
\item $\Vert \alpha n\Vert < \frac{s}{ n\log n}$\\
\item At least $c\log n$ of the numbers $n$, $2n$, $\cdots$, $\lfloor \frac{1}{10}\log n\rfloor n$ are expressible in at least $c\frac{n}{\log n}$ ways as the difference $p_1 - p_2$ of two primes $p_1,p_2 \leqslant \frac{1}{2} n\log n$. 
\end{enumerate}  
\end{Proposition}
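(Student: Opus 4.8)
The plan is to feed condition (ii), which is a statement purely about primes with no reference to $\alpha$, into Harman's theorem as the density hypothesis. Let $\mathcal{B}$ be the set of (sufficiently large) $n$ for which (ii) holds, and apply Theorem \ref{Harman's theorem} with $\psi(n)=\min(\tfrac12,\tfrac{s}{n\log n})$ (so $\psi(n)=\tfrac{s}{n\log n}$ once $n$ is large); since $\sum_n\psi(n)=\infty$, provided one can show $\Psi(N,\mathcal{B})\to\infty$ the theorem yields, for almost all $\alpha$, infinitely many $n\in\mathcal{B}$ with $\Vert\alpha n\Vert<\psi(n)\leqslant\tfrac{s}{n\log n}$ — which is exactly (i) — and each such $n$ satisfies (ii) by definition of $\mathcal{B}$. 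Intersecting over the countable family $s=1/j$ (note $\mathcal{B}$ does not depend on $s$) then handles all $s>0$ simultaneously. So the proposition reduces to showing that $\mathcal{B}$ has \emph{density one}, with enough of a quantitative rate that $\sum_{n\leqslant N,\,n\notin\mathcal{B}}\psi(n)=O_s(1)$ and that Harman's error term $\Psi(N)^{1/2}(\log\Psi(N))^{2+\varepsilon}$ is negligible against $\Psi(N,\mathcal{B})\asymp s\log\log N$.

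To establish (ii) for a typical $n$, I would work in a dyadic block $n\in(Y,2Y]$ with $Y$ large and fix the uniform scale $X'=\tfrac12 Y\log Y$, chosen so that $X'\leqslant\tfrac12 n\log n$ for every $n$ in the block while every multiple $kn$ with $1\leqslant k\leqslant\lfloor\tfrac1{10}\log n\rfloor$ satisfies $kn\leqslant\tfrac12 X'$. Apply Theorem \ref{binary Goldbach} at scale $X'$ with a large fixed $B$: outside an exceptional set $\mathcal{E}\subseteq[1,X']$ with $\lvert\mathcal{E}\rvert\ll_B Y(\log Y)^{1-B}$, every even $m\leqslant\tfrac12 X'$ satisfies $r_{X'}(m)=\mathfrak{S}(m)J_{X'}(m)+O_B(X'(\log X')^{-B})$. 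Now $\mathfrak{S}(m)$ is bounded below by an absolute positive constant for even $m$ (the product over $p\mid m$ is $\geqslant 1$), and $J_{X'}(m)=(X'-m)_+\geqslant\tfrac12 X'$ since the singular integral at scale $X'$ equals $\max(X'-\lvert m\rvert,0)$; hence the main term dominates and $r_{X'}(m)\gg X'\asymp n\log n$. Because $r_X(m)$ is non-decreasing in the prime cutoff $X$, the same lower bound holds for $r_{\frac12 n\log n}(m)$, and dividing by $\big(\log(\tfrac12 n\log n)\big)^2\ll(\log n)^2$ shows such an $m$ is a difference of two primes $\leqslant\tfrac12 n\log n$ in $\gg n/\log n$ ways.

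It remains to bound, for most $n\in(Y,2Y]$, the number of $k\leqslant\tfrac1{10}\log n$ with $kn\in\mathcal{E}$, and this is a union bound:
\[
\sum_{n\in(Y,2Y]}\#\{k\leqslant\tfrac1{10}\log n:\ kn\in\mathcal{E}\}=\sum_{k\leqslant\frac1{10}\log(2Y)}\#\{n\in(Y,2Y]:\ kn\in\mathcal{E}\}\leqslant\tfrac1{10}\log(2Y)\cdot\lvert\mathcal{E}\rvert\ll_B Y(\log Y)^{2-B}.
\]
Hence for all but $\ll_B Y(\log Y)^{1-B}=o(Y)$ of the $n$ in the block, at most $\tfrac1{100}\log n$ of these $k$ are bad. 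Since at least $\gg\log n$ of the $k\leqslant\tfrac1{10}\log n$ already have $kn$ even, removing the bad ones leaves $\gg\log n$ good even multiples, which gives (ii) with $c$ taken to be the minimum of the absolute implied constants above. Summing the resulting bound $\#(\mathcal{B}^{c}\cap(Y,2Y])\ll_B Y(\log Y)^{1-B}$ over dyadic $Y$ delivers (for $B\geqslant 2$) the quantitative density statement needed in the first paragraph, and in particular $\Psi(N,\mathcal{B})=\Psi(N)-O_s(1)\to\infty$.

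The only genuine obstacle is a bookkeeping one: the prime cutoff $\tfrac12 n\log n$ in (ii) varies with $n$, so Theorem \ref{binary Goldbach} cannot be invoked at a single scale across a whole dyadic block. Reconciling this by passing to the uniform scale $X'$ and using monotonicity of $r_X(m)$ in $X$ — which loses only a constant factor because $n\log n\asymp X'$ throughout the block, and which requires only a \emph{lower} bound on $r$ — is what legitimises running the union bound against one fixed exceptional set. The remaining ingredients (positivity of $\mathfrak{S}$ on even integers, the explicit value of the singular integral, verification of the hypotheses of Theorem \ref{Harman's theorem}, and the diagonalisation over $s$) are routine.
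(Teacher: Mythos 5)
Your proof is correct and follows essentially the same route as the paper: define $\mathcal{B}$ by property (ii), show $\mathcal{B}$ has density close to $1$ in dyadic blocks via Theorem~\ref{binary Goldbach} and a union bound over the multiplier $k$, then feed this density into Theorem~\ref{Harman's theorem} with $\psi(n)=\min(\tfrac12,\tfrac{s}{n\log n})$ so that the main term $\asymp\log\log N$ dominates the error, and finally note that $\mathcal{B}$ is $s$-independent so a countable intersection handles all $s$. The singular-series and singular-integral lower bounds, the observation that $\gg\log n$ of the multiples $kn$ are even, and the conversion from the weighted count $r$ to an unweighted count by dividing by $(\log X)^2$ all match the paper's argument. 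The only cosmetic differences: the paper fixes a single scale $X=\tfrac12 K\log K$ and keeps only those $n\in[K,2K)$ for which \emph{every} $m\leqslant\tfrac1{10}\log 2K$ avoids the exceptional set (rather than allowing a small proportion of bad $k$ as you do), and it estimates $\Psi(N,\mathcal{B})\gg_s\Psi(N)$ directly by a dyadic sum rather than through your sharper $\Psi(N,\mathcal{B})=\Psi(N)-O_s(1)$. Your explicit monotonicity remark about $r_X$ in $X$ is harmless but unnecessary, since representations by primes $\leqslant X'$ are automatically representations by primes $\leqslant\tfrac12 n\log n$ once $X'\leqslant\tfrac12 n\log n$.
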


\begin{proof}[Proof of Proposition \ref{key proposition}]
Let $c>0$ be a quantity to be specified later. With this $c$, let $\mathcal{B}$ be the set of natural numbers $n$ which satisfy (ii), and let $\psi(n) = \operatorname{min}(\frac{1}{2},\frac{s}{ n \log n})$. It is to this $\mathcal{B}$ and this $\psi$ that we will apply Theorem \ref{Harman's theorem}.\\

We claim that $\mathcal{B}$ is relatively dense. Indeed, let $K$ be a large integer, and let $n$ and $m$ be natural numbers restricted to the ranges $K\leqslant n<2K$ and $1\leqslant m\leqslant \lfloor \frac{1}{10}\log 2K\rfloor$. For notational convenience we let $X$ denote the quantity $\frac{1}{2} K\log K$, and we consider Theorem \ref{binary Goldbach} with this $X$. We say that the pair $(n,m)$ is \emph{exceptional} if $nm$ lies in the exceptional set from Theorem \ref{binary Goldbach} for which the asymptotic formula (\ref{asymptotic formula for binary Goldbach}) fails to hold. [Note that $nm\leqslant X$, so Theorem \ref{binary Goldbach} applies in this setting.] 

The map $(n,m)\mapsto nm$ is at most $\frac{1}{10}\log 2K$-to-$1$, due to the restricted range of $m$. Since the exceptional set from Theorem \ref{binary Goldbach} has size at most $O_B(\frac{K}{\log ^B K})$, for all $B>0$, we conclude that there are at most $O_B(\frac{K}{\log ^B K})$ exceptional pairs $(n,m)$, for all $B>0$. So certainly there are at least $(1-O_B(\log ^{-B} K))K$ values of $n\in [K,2K)$ such that the asymptotic formula for $r(nm)$ holds for all $m\leqslant \frac{1}{10} \log 2K$. Let $D_K$ denote this set of $n$. \\

$D_K$ is certainly very dense in $[K,2K)$, and we claim further that $D_K\subset \mathcal{B}$, provided we choose $c$ small enough. Combining the different scales $K$ will allow us to show that $\mathcal{B}$ is suitably dense. Indeed, let us analyse the asymptotic formula for $r(nm)$. When $nm$ is even, the singular series $\mathfrak{S}(nm)$ is always $\Omega(1)$. By Fourier inversion, the singular integral is exactly $(\mathbbm{1}_{[0,X]}\ast \mathbbm{1}_{[-X,0]})(nm)$, which is $\Omega(X)$ provided that $\vert nm\vert \leqslant \frac{1}{5} X$. But, by the choice of ranges for $n$ and $m$, this inequality is always satisfied. So, for all $n\in D_K$ and for all $m\leqslant \frac{1}{10}\log n$, such that $nm$ is even, we have $r(nm)\gg X$.

Removing the log weights on the primes, and recalling the definition of $X$, in particular we notice that there is some small absolute constant $c_1$ such the following holds: if $n\in D_K$ and if $nm$ is even, there are at least $c_1K/\log K$ pairs of primes $(p_i,p_j)$ with $p_i,p_j\leqslant \frac{1}{2}K\log K$ such that $p_i - p_j$ = $nm$. By the definition of $\mathcal{B}$, provided $c$ is chosen smaller than $\operatorname{min}(c_1,\frac{1}{100})$, we have that $D_K\subset \mathcal{B}$. \\

We may now prove that $\Psi(\mathcal{B},N)\gg_s \Psi(N)$ for large $N$. Indeed, 
\begin{align*}
\Psi(\mathcal{B},N)& = \sum\limits_{\substack{n\leqslant N\\ n\in \mathcal{B}}} \operatorname{min}(\frac{1}{2},\frac{s}{n\log n})\\
&\gg -O_s(1) + \sum\limits_{k=k_0}^{\lfloor\log_2 N\rfloor - 1 }\sum\limits_{\substack{2^k\leqslant n< 2^{k+1}\\n\in \mathcal{B}}} \frac{s}{2^k\log (2^k)}\\
&\gg -O_s(1) + \sum\limits_{k=k_0}^{\lfloor\log_2 N\rfloor -1}\sum\limits_{2^k\leqslant n< 2^{k+1}}(1-O_B(k^{-B})) \frac{s}{k 2^k}\\
&\gg -O_s(1) + \sum\limits_{k=1}^{\lfloor \log N \rfloor}\frac{s}{k}\\
&\gg_s \log\log N\\
&\gg_s \Psi(N).
\end{align*}

Therefore, applying Theorem \ref{Harman's theorem} to this set $\mathcal{B}$ and this function $\psi$, the main term from the conclusion of Theorem \ref{Harman's theorem} dominates the error term, and we conclude that for almost all $\alpha$ there are infinitely many $n\in \mathcal{B}$ satisfying $\Vert \alpha n\Vert < \frac{s}{n\log n}$. The proposition is proved. 
\end{proof}

With this moderately technical proposition proved, the deduction of Theorem \ref{Main Theorem} is extremely short. 
\begin{proof}[Proof of Theorem \ref{Main Theorem}]
Let $\Omega\subset [0,1]$ be the full-measure set of $\alpha$ for which Proposition \ref{key proposition} holds. Let $c$ be the constant from Proposition \ref{key proposition}, and fix some $s$ satisfying $0<2s<c^2$. Let $\alpha\in \Omega$, and fix a large $N$ to be one of the infinitely many natural numbers which satisfy the conclusions of Proposition \ref{key proposition}. 

By construction, we know that  $$\Vert \alpha N\Vert < \frac{s}{ N\log N}.$$ Therefore, for all $d\leqslant \frac{1}{10}\log N$, we have $$\Vert \alpha dN\Vert < \frac{s}{ N}.$$ But by the second conclusion of Proposition \ref{key proposition}, this implies that there are at least $c^2N$ pairs of distinct primes $p_i,p_j\leqslant \frac{1}{2}N\log N$ such that $$\Vert \alpha (p_i-p_j)\Vert < \frac{s}{ N}.$$ Since $P_N\sim N\log N$, and $N$ is large, this certainly implies that $$F(\mathcal{P},\alpha,s,N)\geqslant c^2 >2s.$$ This holds for infinitely many $N$, and therefore for all $\alpha \in \Omega$ we have $$F(\mathcal{P},\alpha,2s,N)\neq 2s(1+o(1))$$ as $N\rightarrow \infty$. Since $\Omega$ has measure 1, Theorem \ref{Main Theorem} is proved.
\end{proof}
\bibliographystyle{plain}
\bibliography{primesnotmetpoissonian}
\end{document}